\theoremstyle{plain}
\newtheorem{thm}{Theorem}[section]
\newtheorem{conj}[thm]{Conjecture}
\newtheorem{question}[thm]{Question}
\newtheorem{lem}[thm]{Lemma}
\newtheorem{prop}[thm]{Proposition}
\theoremstyle{definition}
\newtheorem{defi}[thm]{Definition}
\theoremstyle{remark}
\newtheorem{rem}[thm]{Remark}
\numberwithin{equation}{section}
\newcommand{\de}{\partial}
\newcommand{\R}{\mathbb{R}}
\newcommand{\N}{\mathbb{N}}
\newcommand{\eps}{\varepsilon}
\newcommand{\average}{{\mathchoice {\kern1ex\vcenter{\hrule height.4pt
width 6pt depth0pt} \kern-9.7pt} {\kern1ex\vcenter{\hrule
height.4pt width 4.3pt depth0pt} \kern-7pt} {} {} }}
\def\R{\mathbb{R}}
\begin{document}

\title[On global solutions related to free boundary problems]{\hspace{-10mm}On global solutions to semilinear elliptic equations \hspace{-10mm}\vspace{1mm} related to the one-phase free boundary problem}

\author{Xavier Fern\'andez-Real}
\address{ETH Z\"urich, Department of Mathematics, R\"amistrasse 101, 8092 Z\"urich, Switzerland}
\email{xavierfe@math.ethz.ch}

\author{Xavier Ros-Oton}
\address{Universit\"at Z\"urich, Institut f\"ur Mathematik, Winterthurerstrasse, 8057 Z\"urich, Switzerland}
\email{xavier.ros-oton@math.uzh.ch}

\keywords{Elliptic PDE; 1D symmetry; De Giorgi conjecture; one-phase free boundary problem; flame propagation}

\subjclass[2010]{35R35, 35J91 (primary), and 35B07 (secondary)}

\begin{abstract}
Motivated by its relation to models of flame propagation, we study globally Lipschitz solutions of $\Delta u=f(u)$ in $\R^n$, where $f$ is smooth, non-negative, with support in the interval $[0,1]$.
In such setting, any ``blow-down'' of the solution $u$ will converge to a global solution to the classical one-phase free boundary problem of Alt--Caffarelli.

In analogy to a famous theorem of Savin for the Allen--Cahn equation, we study here the 1D symmetry of solutions $u$ that are energy minimizers.
Our main result establishes that, in dimensions $n<6$, if $u$ is axially symmetric and stable then it is 1D.
\end{abstract}

\maketitle

\section{Introduction}\label{sec1}

We are interested in the study of certain elliptic equations of the type
\begin{equation}\label{pb}
\Delta u=f(u)\quad \textrm{in}\quad \R^n
\end{equation}
that are related to the classical one-phase free boundary problem \cite{AC81}.

\subsection{Brief description of the model}

Let us briefly introduce the model we have in mind.
Imagine in a domain $\Omega\subset\R^n$ we have a function $w_\varepsilon\geq0$ that minimizes the functional
\begin{equation}
\label{eq.Jeps}
J_\varepsilon(w):=\int_{\Omega}\left\{|\nabla w|^2+\Phi_\varepsilon(w)\right\}dx,
\end{equation}
where $\Phi_\varepsilon'=\beta_\varepsilon$, and $\beta_\varepsilon(t)=\frac{1}{\varepsilon}\beta(t/\varepsilon)$ is an approximation of the Dirac measure at 0, for some $\beta$ smooth, non-negative, with $\int\beta = 1$, and with compact support (see Figure~\ref{fig.0} for a representation of $\Phi_\eps$).
Notice that, when $\varepsilon\to0$, the energy changes discontinuously across the value $w=0$.

This type of singular perturbation problems (and their parabolic counterparts) are typical models for flame propagation \cite{CV95,BL82,We03,PY07}, and are studied in detail in the book of Caffarelli and Salsa \cite{CS05}.

In such combustion models, the sets $\{w_\varepsilon=0\}$, $\{0<w_\varepsilon\leq \varepsilon\}$, and $\{w_\varepsilon>\varepsilon\}$, represent the \emph{burnt}, \emph{reaction}, and \emph{unburnt} zones, respectively.

\begin{figure}
\centering
\includegraphics{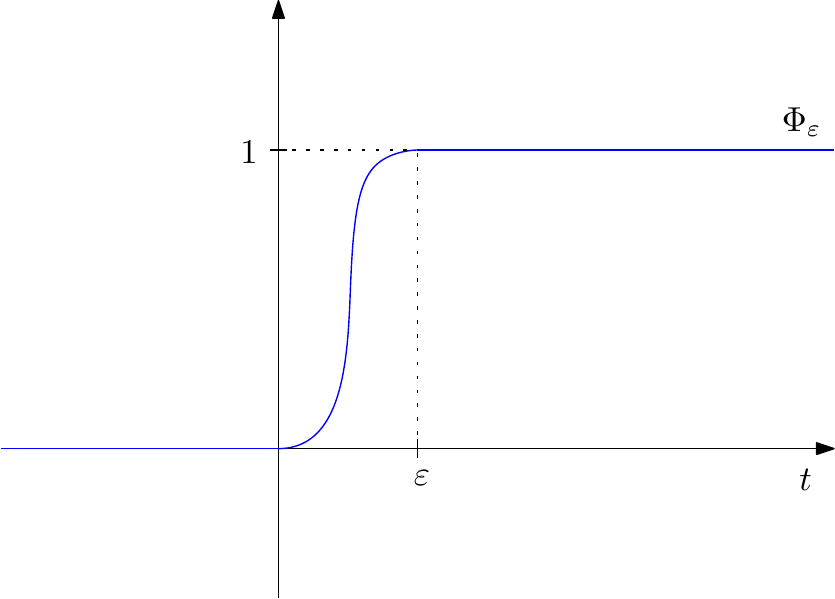}
\caption{Representation of $\Phi_\eps(t) = \int_0^t \beta_\eps(s)\, ds$. }
\label{fig.0}
\end{figure}

The transition between the two states (unburnt to burnt) occurs in a thin region of width comparable to $\varepsilon$.
If we try to understand such transition at length-scale $\varepsilon$, we need to dilate the picture by a factor $1/\varepsilon$.
Then, the rescaled solution $u(x):=\frac{1}{\varepsilon} w(\varepsilon x)$ minimizes the energy
\[J(u):=\int_{\frac{1}{\varepsilon}\Omega}\left\{|\nabla u|^2+\Phi(u)\right\}dx,\]
where $\Phi'=\beta$, and hence solves the elliptic equation
\begin{equation}\label{semilinear}
\Delta u=\textstyle{\frac12}\beta(u) \quad \textrm{in}\quad {\textstyle\frac{1}{\varepsilon}}\Omega
\end{equation}
in the large domain $\Omega/\varepsilon$.
On the other hand, if we let $\varepsilon\to0$ in the original domain~$\Omega$, the functions $w_\varepsilon$ converge to $w_0$, a minimizer of the one-phase problem, given by the functional
\begin{equation}
\label{eq.J0_0}
J_0(w) = \int_\Omega \left\{|\nabla w|^2+\chi_{\{w > 0\}}\right\}.
\end{equation}
Such minimizer $w_0$ fulfils
\begin{equation}\label{one-phase}
\Delta w_0=0\quad \textrm{in}\quad \{w_0>0\}\cap \Omega,
\end{equation}
\begin{equation}\label{one-phase2}
|\nabla w_0|=1\quad \textrm{on}\quad \partial\{w_0>0\}\cap \Omega.
\end{equation}
Therefore, there is a deep connection between the semilinear elliptic equation \eqref{semilinear} and the one-phase free boundary problem \eqref{one-phase}-\eqref{one-phase2}.
In fact, it is easy to see that any ``blow-down'' of a global solution to \eqref{semilinear} will converge to a global non-negative solution to the free boundary problem \eqref{one-phase}-\eqref{one-phase2}.

The solution to \eqref{semilinear} obtained by this procedure will \emph{grow linearly} at infinity.
By classical uniform Lipschitz estimates for this problem (see \cite{CS05}), we deduce that any such solution $u$ will be globally Lipschitz.

Thus, after letting $\varepsilon\to0$ in \eqref{semilinear}, we want to understand solutions of \eqref{pb}, with 
\[f = \textstyle{\frac12} \beta,\]
 and $\beta$ and $u$ satisfying
\begin{itemize}
\item[(A1)] $\beta$ is non-negative and $C^1$, $\textrm{supp}\,\beta=[0,1]$, and $\int_0^1\beta=1$.
\item[(A2)] $u\geq0$ satisfies $\|\nabla u\|_{L^\infty(\R^n)}<\infty$.
\end{itemize}

Let us see next what are the main questions in this context.

\subsection{Some natural open problems}

A natural notion in this context is the following.

\begin{defi}
Assume (A1)-(A2). We say that $u$ is a \emph{minimizer} of the energy $J$ in $\R^n$ when it minimizes
\[J(u,E)=\int_{E}\left\{|\nabla u|^2+\Phi(u)\right\}dx\]
for every compact set $E\subset\R^n$, i.e., when $J(u,E)\leq J(u+v,E)$ for every $v\in H^1_0(E)$. Here, $\Phi'=\beta$.
\end{defi}

Given the relation between our problem \eqref{pb} and the one-phase free boundary problem \eqref{one-phase}-\eqref{one-phase2}, and because of the known results on the regularity of free boundaries in such context \cite{JS15,CJK04}, it seems natural to conjecture the following:

\begin{conj}\label{conj1}
Assume (A1)-(A2), and let $u$ be any minimizer of the energy $J$ in $\R^n$, with $n\leq4$.
Then, $u$ is one-dimensional.
\end{conj}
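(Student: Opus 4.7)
The plan is to follow the analog of Savin's strategy for the De Giorgi conjecture for Allen--Cahn, transferred to the one-phase setting. The key observation, already recorded in the excerpt, is that any blow-down $u_R(x) := R^{-1} u(Rx)$ of a global minimizer $u$ converges along a subsequence to a global minimizer $w_0$ of the Alt--Caffarelli functional $J_0$ in \eqref{eq.J0_0}. The dimensional restriction $n\leq 4$ is precisely the range in which minimizing cones for the one-phase free boundary problem are known to be half-spaces (Caffarelli--Jerison--Kenig in $n=3$, Jerison--Savin in $n=4$). The aim is to use this rigidity of blow-downs to force $u$ itself to be one-dimensional.

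First I would make the blow-down step quantitative. The uniform Lipschitz bound from (A2) together with the classical $\eps$-regularity theory for \eqref{semilinear} gives $C^{1,\alpha}_{\rm loc}$ compactness of $\{u_R\}_{R\to\infty}$ away from the free boundary, and minimality yields convergence in $H^1_{\rm loc}$ of the Dirichlet part and Hausdorff convergence of the positivity sets, so that the singular terms pass to the limit correctly. For $n\leq 4$ one then concludes $w_0(x) = (x\cdot e)_+$ for some unit vector $e$. In particular, the level sets of $u$ are uniformly flat at all sufficiently large scales.

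The core of the argument would then be an improvement-of-flatness iteration, combining De Silva's approach for the one-phase free boundary problem with Savin's viscosity method for semilinear equations. Concretely, one linearizes \eqref{pb} around the 1D profile $U$ determined by $U''=f(U)$, $U\equiv 0$ on $(-\infty,0]$, $U'\equiv 1$ on $\{U\geq 1\}$, establishes a Liouville theorem for the resulting linearized operator in $\R^{n-1}$ valid in the range $n\leq 4$, and iterates it to propagate flatness of the level sets from large scales down to all scales. The conclusion is that every level set of $u$ is a hyperplane, hence $u$ is 1D.

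The main obstacle, I expect, is this last linearization-and-iteration step. Unlike Allen--Cahn, where the 1D profile decays exponentially to its two asymptotic values, here $U$ is only piecewise smooth: it vanishes identically on a half-line, passes through a bounded reaction layer $\{0<U<1\}$, and becomes affine on $\{U\geq 1\}$. The linearized problem therefore couples three qualitatively different regimes across the two level sets $\{u=0\}$ and $\{u=1\}$, and controlling this coupling uniformly across scales---in particular handling the free boundary $\{u=0\}$, whose smoothness is itself obtained only a posteriori from the blow-down classification---is exactly the difficulty that in the present paper is bypassed by imposing axial symmetry. Even the fully minimizing case in low dimensions seems to demand a genuinely new device beyond the analogy with Savin's theorem.
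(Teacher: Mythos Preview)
The statement you are attempting to prove is explicitly labeled a \emph{Conjecture} in the paper and is \emph{not} proved there. The authors say that the case $n=2$ follows from known results (citing \cite{FV08}), but that the cases $n=3$ and $n=4$ remain open; the only rigorous result the paper establishes in this direction is Theorem~\ref{thm.1stabprop}, which assumes the additional hypothesis of axial symmetry (and stability rather than minimality). So there is no ``paper's own proof'' to compare against.

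Your proposal is a reasonable sketch of the \emph{expected} strategy---blow down to a one-phase minimizer, use the Caffarelli--Jerison--Kenig / Jerison--Savin classification of minimizing cones in $n\leq 4$, then run a Savin-type improvement-of-flatness iteration---and you are honest that the last step is where the real difficulty lies. But as written it is an outline with an acknowledged gap, not a proof. In particular: (i) the linearization around the 1D profile $U$ is not worked out, and as you note the profile has three regimes (identically zero, reaction layer, affine) rather than the exponentially decaying tails of Allen--Cahn, so the spectral and Harnack-type ingredients Savin uses do not transfer directly; (ii) the ``Liouville theorem for the linearized operator in $\R^{n-1}$ valid for $n\leq 4$'' is asserted but not even formulated precisely, let alone proved; (iii) flatness at large scales does not by itself give you the initial smallness needed to start the iteration at a \emph{fixed} scale, since the blow-down convergence is only along subsequences and the limit half-space direction $e$ could a priori depend on the subsequence.

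In short: your proposal correctly identifies the natural route and its main obstacle, but it does not close the gap, and neither does the paper---this is genuinely an open problem.
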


In $\R^2$, such conjecture follows from well known results; see for example \cite[Theorem 2.1]{FV08}.
Thus, the important cases that remain open are $\R^3$ and $\R^4$.

The condition on the dimension, $n\le 4$, is based on the fact that minimal cones for the one-phase problem have been shown to be hyperplanes up to $n= 4$. This, however, is currently not known to be optimal, and in fact, Caffarelli, Jerison, and Kenig conjectured in \cite{CJK04} that this holds up to dimension $n= 6$ (there are counterexamples in dimension $n = 7$, as seen in \cite{DJ09}). Thus, linking both conjectures together, one could extend the statement of Conjecture~\ref{conj1} up to dimension $n \le 6$.

\begin{rem}
Notice that there is a very strong analogy between Conjecture \ref{conj1} and a famous theorem of Savin for the Allen--Cahn equation (\cite{Sa09}, see also \cite{DG78,AC00}).
Indeed, when the energy $J_\varepsilon$ above is replaced by
\[\int_\Omega \left\{\frac{\varepsilon}{2}|\nabla u|^2+W(u)\right\}dx,\]
where $W$ is a \emph{double-well} potential, then the above procedure yields solutions to the semilinear elliptic equation \eqref{pb}, with $f$ \emph{bistable}.
The most typical example is the Allen--Cahn equation $-\Delta u=u-u^3$ in $\R^n$, arising in the study of phase transitions.
An important difference is that in the Allen--Cahn case the solution $u$ obtained by this procedure is \emph{globally bounded} in $\R^n$, while ours grows linearly at infinity.
\end{rem}

More generally, one may expect Conjecture \ref{conj1} to hold even for {stable} solutions to \eqref{pb}, defined as follows.

\begin{defi}
Assume (A1)-(A2). We say that $u$ is a \emph{stable} solution of \eqref{pb} if the second variation of energy at $u$ is non-negative, i.e.,
\begin{equation}
\label{eq.stabcond_2}
\int_{\R^n}\left\{|\nabla \xi|^2+\textstyle{\frac12}\beta'(u)\xi^2\right\}dx\geq0
\end{equation}
for every $\xi\in C^\infty_c(\R^n)$.
\end{defi}

A second natural problem (c.f. \cite{DJ09}) is the following:

\begin{conj}
Given $\beta$ satisfying (A1), there exists a globally Lipschitz minimizer of the energy $J$ in $\R^7$ which is not 1D.
\end{conj}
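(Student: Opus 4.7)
The plan is to construct the desired non-$1$D minimizer in $\R^7$ by starting from the axially symmetric, non-planar global minimizer $w_0$ of the one-phase functional $J_0$ in \eqref{eq.J0_0} produced by De Silva and Jerison \cite{DJ09}, and then building a semilinear minimizer of $J$ whose blow-down equals $w_0$. Since non-planarity of the blow-down is incompatible with $1$D symmetry, this will give the conclusion.

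The construction is diagonal. For each $\eps>0$, consider the singularly perturbed functional $J_\eps$ from \eqref{eq.Jeps}, and minimize it in a ball $B_R\subset\R^7$ among non-negative functions with boundary data $w_0|_{\partial B_R}$; call such a minimizer $w_{\eps,R}$. The rescaled function $u_{\eps,R}(y):=\eps^{-1}\, w_{\eps,R}(\eps y)$ is then a minimizer of the rescaled energy $J$ in the ball $B_{R/\eps}$. By the classical uniform Lipschitz estimates for $J_\eps$ (see \cite{CS05}) these have gradient bounded independently of $\eps$ and $R$. Picking $R=R(\eps)\to\infty$ fast enough that $R/\eps\to\infty$, Arzel\`a--Ascoli produces, along a subsequence, a global minimizer $u$ of $J$ in $\R^7$ satisfying (A1)-(A2).

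It remains to show that $u$ is not one-dimensional. The idea is to track the blow-down: by $\Gamma$-convergence of $J_\eps$ to $J_0$ together with the stability of minimizers under such convergence, the rescaling $u_\lambda(x):=\lambda^{-1} u(\lambda x)$ is close to $w_0$ in $B_1$ when $\lambda$ is suitably coupled to the diagonal parameter $\eps$. Passing $\lambda\to\infty$ would then show that \emph{some} blow-down of $u$ agrees with $w_0$, which is impossible for a $1$D solution, since every blow-down of a $1$D minimizer of $J$ is a half-plane solution of the one-phase problem.

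The main obstacles I expect are twofold. First, ensuring that the limit $u$ is genuinely non-trivial (not $u\equiv 0$ nor a half-plane solution): this should follow from a lower energy bound in large balls that propagates the \emph{strict} excess energy of the DJ cone over the $1$D competitor through the limits in $\eps$ and $R$. Second, and more delicate, is identifying the blow-down of $u$: uniqueness of blow-downs for minimizers is notoriously hard, so a realistic variant of the argument is to prove only the weaker statement that \emph{some} blow-down of $u$ is non-planar, using that non-planarity is an open condition on the space of one-phase minimizers under Hausdorff convergence of free boundaries. Combining this openness with the quantitative $J_\eps\to J_0$ estimate and the non-planarity of $w_0$ would suffice to exclude $1$D symmetry of $u$, completing the construction.
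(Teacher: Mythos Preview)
The statement you are attempting to prove is stated in the paper as a \emph{conjecture}, not a theorem; the paper gives no proof of it and explicitly lists it among the ``natural open problems'' in Section~\ref{sec1}. There is therefore no paper proof to compare your proposal against.

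As to the proposal itself: the strategy of starting from the De~Silva--Jerison minimizer $w_0$ and lifting it to a minimizer of $J$ via a diagonal $\Gamma$-convergence argument is natural, but the sketch has a genuine logical gap in the final step. Once you have passed to the limit and obtained a fixed global minimizer $u$ of $J$, the parameter $\eps$ has disappeared; the blow-down $u_\lambda(x)=\lambda^{-1}u(\lambda x)$ is a limit in $\lambda$ alone, and there is no longer any $\eps$ to ``suitably couple'' to $\lambda$. What you would actually need is a statement of the form: along the diagonal sequence, the functions $u_{\eps,R(\eps)}$ stay uniformly close to (a rescaling of) $w_0$ on some fixed ball, \emph{and} this closeness survives the compactness limit producing $u$. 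That is a quantitative stability statement for $\Gamma$-convergence which does not follow from the abstract $\Gamma$-limit alone, and it is precisely here that the difficulty of the conjecture lies.

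Your own list of obstacles is honest, but note that the second one is not merely ``delicate'': without some mechanism (e.g., an energy gap, a density estimate, or a monotonicity-type quantity) that pins the free boundary of $u$ near that of $w_0$ at \emph{some} large scale, the limit $u$ could a priori collapse to a half-plane solution even though each approximant $w_{\eps,R}$ has boundary data from $w_0$. The ``openness of non-planarity'' you invoke is a statement about limits of one-phase minimizers, not about limits of $J_\eps$-minimizers converging to a $J$-minimizer, so it does not directly apply. In short, the outline is a reasonable program, but as written it does not constitute a proof, and the paper treats the statement as open.
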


As already mentioned, for the remaining dimensions, $n=5$ and $n=6$, the question is not even understood in case of the one-phase problem.

We think that all these are difficult and interesting questions to be studied.

\subsection{Axially symmetric solutions}

Here, we plan to study Conjecture \ref{conj1} for a special class of solutions with symmetries.

Notice that in case of the Allen--Cahn equation, the natural candidates for non-1D solutions are those with symmetry of ``double revolution''; see \cite{CT09,LWW16}.
This is because in minimal surfaces the simplest counterexample to regularity is the Simons cone, which has this symmetry.

In the one-phase free boundary problem, instead, the natural nontrivial solutions are \emph{axially symmetric}; see \cite{CJK04,DJ09}. Namely, solutions $u:\R^n\to \R$, depending only on $s := \sqrt{x_1^2+\dots+x_{n-1}^2}$ and $t := x_n$
\[
u(x_1,\dots,x_n) = u(|x'|, x_n) =u(s, t),
\]
where $x'= (x_1,\dots,x_{n-1})\in \R^{n-1}$.
 
In our setting \eqref{pb}, we have the following natural:

\begin{question}
\label{question.1}
Assume (A1)-(A2). Can one prove that all axially symmetric solutions to \eqref{pb} are either \emph{1D} or \emph{unstable} in dimensions $n\leq6$?
\end{question}

The aim of this paper is to investigate this question.

\subsection{Our results}

Following, we present the two main theorems of this paper. The first result answers affirmatively Question~\ref{question.1} up to dimensions $n \le 5$; namely, it says that any axially symmetric solution to \eqref{pb} is either unstable or one-dimensional. We would like to remark that, in the proof, the dimension can be taken up to $n < 6$, but that the limiting case $ n = 6$ is not achieved (and thus, remains an open problem). 

\begin{thm}
\label{thm.1stabprop}
Let $\beta$ satisfy (A1). Let $u\in {\rm Lip}(\R^n)$ be a stable axially-symmetric solution to 
\[
\Delta u = \textstyle{\frac12}\beta(u),\quad\textrm{ in } \R^n.
\]
Then, if $n \le 5$, $u$ is one-dimensional. 
\end{thm}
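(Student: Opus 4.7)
The plan is to prove $u_s \equiv 0$ (so that $u$ depends only on the axial variable $t$), by squeezing as much information as possible out of the stability inequality via the axial symmetry. I would first decompose arbitrary test functions $\xi \in C_c^\infty(\R^n)$ in spherical harmonics on the transverse sphere $S^{n-2}$, which splits \eqref{eq.stabcond_2} into a family of weighted 2D inequalities on the half-plane $\{(s,t) : s > 0\}$. For the non-axisymmetric mode $k=1$ (with angular eigenvalue $\lambda_1 = n-2$), the stability takes the form
\begin{equation*}
\int_0^\infty\!\!\int_\R\Bigl\{\zeta_s^2 + \zeta_t^2 + \tfrac{n-2}{s^2}\zeta^2 + \tfrac12\beta'(u)\,\zeta^2\Bigr\}\, s^{n-2}\,dt\,ds \ge 0
\end{equation*}
for every smooth $\zeta$ compactly supported in the open half-plane.

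The key computational step is the substitution $\zeta = s\,\phi$. After integration by parts in $s$, the cross term $2 s^{n-1}\phi\phi_s$ produces a contribution that cancels \emph{exactly} both the $\phi^2 s^{n-2}$ piece coming from $\zeta_s^2$ and the Hardy potential $\tfrac{n-2}{s^2}\zeta^2$. What remains is
\begin{equation*}
\int_0^\infty\!\!\int_\R\Bigl\{(\phi_s^2 + \phi_t^2) + \tfrac12\beta'(u)\,\phi^2\Bigr\}\, s^{n}\,dt\,ds \ge 0,
\end{equation*}
that is, precisely the stability inequality of the axisymmetric problem in \emph{effective dimension} $n+2$. In parallel, differentiating $\Delta u = \tfrac12\beta(u)$ with respect to $s$ gives $\Delta u_s = \tfrac12\beta'(u)u_s + \tfrac{n-2}{s^2}u_s$, and a short direct computation then shows that the function $v := u_s/s$, which is bounded on the whole half-plane by interior $C^{2,\alpha}$-regularity and axial symmetry, satisfies
\begin{equation*}
v_{ss} + v_{tt} + \tfrac{n}{s}\,v_s = \tfrac12\beta'(u)\,v.
\end{equation*}
In other words, $v$ is a bounded Jacobi field of the stability operator in effective dimension $n+2$, and $u$ is one-dimensional iff $v\equiv 0$.

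It remains to force $v \equiv 0$ whenever the effective dimension $n+2 \le 7$, i.e.\ $n \le 5$. The strategy I would use is a Sternberg--Zumbrun / Liouville-type argument combined with a blow-down analysis: the rescalings $u_R(x) := u(Rx)/R$ converge subsequentially to a global axially symmetric solution $u_\infty$ of the one-phase free boundary problem \eqref{one-phase}--\eqref{one-phase2}, which inherits the stability (or minimality) of $u$, and to which the effective-dimension-$(n+2)$ stability of $v$ passes. In this effective regime $n+2\le 7$, axisymmetric stable one-phase minimal cones are expected/known to be half-planes (in analogy with \cite{CJK04,DJ09}); lifting this rigidity to $v$ via the linear equation above, the global Lipschitz bound on $u$ (which gives the decay $|v|\le \|\nabla u\|_\infty/\max(s,1)$ away from the axis), and the fact that $\beta'(u)$ is supported in the bounded strip $\{0<u<1\}$ in the growth direction, yields $v\equiv 0$. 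The main obstacle lies precisely here: carrying out this weighted Liouville/rigidity step in the half-plane with measure $s^n\,ds\,dt$ and sign-indefinite potential $\tfrac12\beta'(u)$, and aligning it with the critical dimension of the one-phase problem. The borderline case $n = 6$ (effective dimension $8$) is not reachable by this approach, reflecting the De Silva--Jerison counterexample in dimension $7$, below which even axisymmetric stable one-phase cones are no longer expected to be flat.
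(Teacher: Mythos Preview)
Your computational observations are correct: the $k=1$ spherical-harmonic mode of the stability inequality does reduce, after the substitution $\zeta = s\phi$, to a stability inequality with weight $s^n$ (``effective dimension $n+2$''), and $v = u_s/s$ does satisfy the linearised equation with that weight. But this is precisely why the approach stalls. Taking $\xi = \zeta(s,t)Y_1(\omega)$ with $\zeta = u_s$ amounts to $\xi = u_{x_i}$, a \emph{translational} Jacobi field; equivalently, $v$ is the exact Jacobi field of your effective operator. Plugging $\phi = v\eta$ into your displayed inequality and integrating by parts against the equation for $v$ yields only the trivial $\int v^2|\nabla\eta|^2\,s^n\,ds\,dt \ge 0$. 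The $k=1$ mode has been ``used up'' by translation invariance and carries no information about whether $v$ vanishes.

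Your concluding paragraph does not close this gap. The blow-down $u_R(x)=u(Rx)/R$ converges to a one-phase solution in dimension $n$, not $n+2$; the ``effective dimension'' is a feature of the linearised equation for $v$, not of the limiting free boundary problem, so the threshold $n+2\le 7$ is not tied to any one-phase rigidity statement (and note the De Silva--Jerison example is in dimension $7$, which under your count would be $n=5$, contradicting the theorem). Even granting flatness of the blow-down, ``lifting this rigidity to $v$'' is not an argument --- it would require an improvement-of-flatness step for the semilinear equation, which is itself the hard part and is not supplied. The paper bypasses all of this by testing the \emph{axisymmetric} ($k=0$) stability with $\xi = u_s\eta$. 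In that mode $u_s$ is \emph{not} a Jacobi field: it solves $\Delta u_s - \tfrac12\beta'(u)u_s = (n-2)s^{-2}u_s$ with a genuine Hardy remainder, and one obtains directly
\[
(n-2)\int_{\R^n} u_s^2\,\eta^2 s^{-2}\,dx \;\le\; \int_{\R^n} u_s^2\,|\nabla\eta|^2\,dx.
\]
Choosing $\eta \approx s^{-\alpha}$ (with cutoffs in $s$ and at infinity) then forces $u_s\equiv 0$ exactly when one can pick $\alpha$ with $\alpha^2 < n-2 < 2\alpha$, i.e.\ for $2<n<6$. The missing idea in your attempt is to use the $k=0$ mode, where $u_s$ is not a Jacobi field, rather than the $k=1$ mode, where it is.
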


\begin{rem}
It is worth mentioning that the conditions on $\beta$ can be relaxed considerably. Indeed, the proof of the theorem only uses that $\beta\in C^1$, with no extra assumption. 

Nonetheless, the fact that $u\in {\rm Lip}(\R^n)$ solves the problem already poses some restrictions on $\beta$. For instance, it would be enough to assume that $\beta$ is compactly supported in $\R^n$, instead of the more restrictive condition that $\beta\ge 0$ and ${\rm supp }\, \beta = [0,1]$. That is, $\beta$ could have a non-connected support and be negative somewhere. Under these assumptions, however, it is no longer true that one can prove the $\Gamma$-convergence to the one-phase problem, and studying (and classifying) one-dimensional solutions becomes a more difficult task. 
\end{rem}

The second result studies the same kind of solutions for the one-phase free boundary problem. In this case, Liu, Wang, and Wei, constructed in \cite{LWW17} smooth axially-symmetric solutions of ``catenoid'' type. 

\begin{thm}
\label{thm.1stab1phase}
Let $u\in{\rm Lip}(\R^n)$ be a stable, non-negative, axially-symmetric solution to the one-phase problem. That is, $u$ is a stable critical point of the minimizer 
\begin{equation}
\label{eq.onephasefunctional}
J(u, B) = \int_B\left\{ |\nabla u|^2+\chi_{\{u > 0\}}\right\}
\end{equation}
for any ball $B\subset\R^n$. Suppose also that $\partial\{u > 0\}$ is smooth. Then, if $n\le 5$, $u$ is one-dimensional.  
\end{thm}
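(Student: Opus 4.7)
The plan is to adapt the strategy used to prove Theorem~\ref{thm.1stabprop}, replacing the semilinear stability condition \eqref{eq.stabcond_2} by its free boundary counterpart. Under the smoothness of $\Sigma := \partial\{u>0\}$, a standard second variation computation for the functional \eqref{eq.onephasefunctional} along normal deformations of $\Sigma$ yields the stability inequality
\[
\int_{\{u>0\}} |\nabla \phi|^2\, dx \,\ge\, \int_\Sigma H\, \phi^2 \, d\mathcal{H}^{n-1}, \qquad \phi \in C^1_c(\R^n),
\]
where $H$ denotes the mean curvature of $\Sigma$ with respect to the unit normal pointing in the direction of $\nabla u$. This inequality, which is the natural free boundary analogue of \eqref{eq.stabcond_2}, is obtained by minimizing the Dirichlet energy over all extensions of $\phi|_\Sigma$ into $\{u>0\}$.

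Following the approach of Theorem~\ref{thm.1stabprop}, the idea is now to test the stability inequality with $\phi_i = \eta\, \partial_{x_i} u$ for $i = 1, \dots, n-1$, where $\eta = \eta(s,t)$ is a cutoff depending only on $s = |x'|$ and $t = x_n$. Each $\partial_{x_i} u$ is harmonic in $\{u > 0\}$ by harmonicity of $u$, and its trace on $\Sigma$ equals $\nu_i$ since $\nabla u = \nu$ and $|\nabla u| = 1$ there. Axial symmetry gives $\partial_{x_i} u = (x_i/s)\, u_s$, so that $\sum_{i<n} (\partial_{x_i} u)^2 = u_s^2$ in $\{u>0\}$ and $\sum_{i<n} \nu_i^2 = 1 - \nu_n^2 = \nu_s^2$ on $\Sigma$, with $\nu_s$ the $s$-component of $\nu$ in the reduced $(s,t)$ half-plane. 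Integrating by parts using the harmonicity of $\partial_{x_i} u$ and the identity $\partial_\nu^2 u = -H$ on $\Sigma$ (a consequence of $\Delta u = 0$, $|\nabla u|=1$, and $u=0$ on $\Sigma$), then summing the stability inequalities over $i < n$, one arrives at an inequality of the schematic form
\[
\int_\Sigma H\, g(\nu_s, \nu_n)\, \eta^2 \, d\mathcal{H}^{n-1} \,\le\, \int_{\{u>0\}} u_s^2\, |\nabla \eta|^2 \, dx,
\]
for an explicit non-negative function $g$ built from the normal components.

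The last step is to reduce to the two-dimensional picture in the $(s,t)$ half-plane, where the axial symmetry introduces the weight $s^{n-2}$. Writing $\Sigma$ as the surface of revolution generated by a smooth profile curve $\Gamma \subset \{s \ge 0\}$, the mean curvature decomposes as
\[
H = \kappa_\Gamma + (n-2)\, \frac{\nu_s}{s},
\]
where $\kappa_\Gamma$ is the signed curvature of $\Gamma$. Plugging this decomposition into the inequality above and choosing $\eta$ to be a logarithmic Simons-type cutoff yields a weighted one-dimensional inequality on $\Gamma$ whose sharp analysis in the range $n < 6$ forces $u_s \equiv 0$ in $\{u>0\}$. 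This in turn gives that $u$ depends only on $t$, i.e., $u$ is one-dimensional. The dimensional obstruction enters precisely through the weight $s^{n-2}$ and the factor $(n-2)$ in the mean-curvature decomposition, and the argument narrowly misses $n = 6$.

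The main obstacle, exactly as in Theorem~\ref{thm.1stabprop}, is the sharp choice of cutoff and the rigidity argument extracting the threshold $n < 6$: one must track carefully how the constants depend on the dimension in the resulting weighted Hardy-type inequality on $\Gamma$. Additional care is required near the symmetry axis $\{s = 0\}$, where $\nu_s/s$ has to be interpreted using smoothness of $u$ and of $\Sigma$, and in justifying the second variation identity and the integration by parts in the non-compact setting, which is handled by standard cutoff and exhaustion arguments. An indirect alternative would be to approximate $u$ by stable semilinear solutions and invoke Theorem~\ref{thm.1stabprop} after passing to the limit, but preserving stability along such an approximation is itself delicate.
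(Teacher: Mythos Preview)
Your proposal has a genuine gap at the step where you test the stability inequality with $\phi_i=\eta\,\partial_{x_i}u$ for $i<n$ and sum. Each $\partial_{x_i}u$ is harmonic in $\{u>0\}$, and the boundary identity you invoke (essentially $\partial_\nu^2 u=-H$) gives, after the integration by parts, exactly
\[
u_i\,\partial_\nu u_i \;=\; H\,u_i^2 \quad\text{on }\Sigma.
\]
Thus the surface term produced by the integration by parts cancels \emph{identically} with the left-hand side $\int_\Sigma H\eta^2 u_i^2$ of the stability inequality. Summing over $i<n$ you are left with the trivial inequality $0\le\int_{\{u>0\}}u_s^2|\nabla\eta|^2$; in your notation $g\equiv 0$. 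No information survives, and the subsequent mean-curvature decomposition $H=\kappa_\Gamma+(n-2)\nu_s/s$ is applied to a term that has already disappeared. Moreover, a ``logarithmic Simons-type cutoff'' would at best handle a single critical dimension, not the range $3\le n\le 5$.

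The point you are missing is that one must test directly with $c=u_s$ (not with the Cartesian derivatives). In $\{u>0\}$ one has $\Delta u_s=(n-2)\,u_s/s^2$, not $\Delta u_s=0$; this extra interior term is precisely what drives the argument. With $\xi=u_s\,\eta$, the same boundary identity $\nabla u_s\cdot\nu=H\,u_s$ on $\Sigma$ makes the surface contributions cancel, and one is left with
\[
(n-2)\int_{\{u>0\}}u_s^2\,\eta^2\,s^{-2}\;\le\;\int_{\{u>0\}}u_s^2\,|\nabla\eta|^2,
\]
which is exactly inequality \eqref{eq.stab2} from the proof of Theorem~\ref{thm.1stabprop}. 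From here one proceeds verbatim with the power cutoff $\eta\sim s^{-\alpha}\rho_R$ and the compatibility conditions $2\alpha>n-2>\alpha^2$ to obtain $u_s\equiv 0$ for $3\le n\le 5$. The case $n=2$ requires a separate argument (testing with $|\nabla u|\,\eta$ and the Sternberg--Zumbrun identity), which your proposal does not address.
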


In particular, this result yields that the axially symmetric solutions to the one-phase problem constructed in \cite{LWW17} are unstable if $n \le 5$. 

\subsection{Organization of the paper}

The paper is organized as follows. In Section~\ref{sec.2} we prove some basic properties of solutions. In particular, we show that the Lipschitz condition can be relaxed to a linear growth at infinity of solutions, we prove the $\Gamma$-convergence of the functionals in \eqref{eq.Jeps} to the one-phase energy functional, and we make a general discussion about one-dimensional solutions to \eqref{pb}. In Section~\ref{sec.3} we then prove Theorem~\ref{thm.1stabprop}, and finally, in Section~\ref{sec.4} we prove Theorem~\ref{thm.1stab1phase}. 
\vspace*{0.5cm}

{\it Acknowledgement:} The first author is supported by the ERC Grant ``Regularity and Stability in Partial Differential Equations (RSPDE)''.

\section{Basic properties of solutions}
\label{sec.2}

Let us begin by showing some basic properties of solutions to our problems, and of the energy functionals involved in them. 

\subsection{Linear growth implies globally Lipschitz}

Let us show that the globally Lipschitz conditions can be weakened to linear growth at infinity. 

\begin{prop} 
Let $u$ be non-negative, be a solution to \eqref{pb} with $f = \frac12\beta$. Suppose that 
\begin{equation}
\label{eq.linear}
\|u\|_{L^\infty(B_R)} \le C_0 R,\quad\textrm{ for all } R \ge 1.
\end{equation}
Then 
\begin{equation}
\label{eq.lincons}
\|\nabla u \|_{L^\infty(\R^n)} \le C C_0,
\end{equation}
for some $C$ depending only on $\beta$ and $n$. That is, $u$ is globally Lipschitz.
\end{prop}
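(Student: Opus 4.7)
The plan is to reduce the statement to the classical $\varepsilon$-uniform Lipschitz estimate for the singularly perturbed problem $\Delta v = \tfrac12 \beta_\varepsilon(v)$ (with $\beta_\varepsilon(t) = \varepsilon^{-1}\beta(t/\varepsilon)$), which is the Caffarelli--Salsa regularity result already cited in the introduction of the paper. Linear growth is precisely the right scaling invariance to produce, at every base point and every scale, a rescaled function with controlled $L^\infty$-norm that solves an $\varepsilon$-problem for some $\varepsilon \in (0,1]$.

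Concretely, I would proceed as follows. Fix an arbitrary $x_0 \in \R^n$ and set $R := \max(|x_0|,1)$. Consider
\[
v(x) := R^{-1} u(x_0 + Rx), \qquad x \in B_1.
\]
A direct chain-rule computation gives
\[
\Delta v(x) = R\,\Delta u(x_0 + Rx) = \frac{R}{2}\beta\bigl(R v(x)\bigr) = \frac{1}{2}\beta_{1/R}\bigl(v(x)\bigr),
\]
so $v \ge 0$ solves the $\varepsilon$-problem in $B_1$ with $\varepsilon := 1/R \in (0,1]$. Since $R \ge |x_0|$, the inclusion $B_R(x_0) \subset B_{2R}(0)$ together with the linear growth hypothesis \eqref{eq.linear} gives
\[
\|v\|_{L^\infty(B_1)} = R^{-1}\|u\|_{L^\infty(B_R(x_0))} \le R^{-1}\|u\|_{L^\infty(B_{2R})} \le 2C_0.
\]
Invoking the classical uniform-in-$\varepsilon$ Lipschitz estimate for the singularly perturbed problem (\cite{CS05}), one obtains
\[
\|\nabla v\|_{L^\infty(B_{1/2})} \le C\bigl(1 + \|v\|_{L^\infty(B_1)}\bigr) \le C(1 + 2C_0),
\]
with $C$ depending only on $\beta$ and $n$. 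Since the scaling preserves gradients ($\nabla v(0) = \nabla u(x_0)$), this yields $|\nabla u(x_0)| \le C(1 + 2C_0)$, which we absorb into $C\,C_0$ after adjusting the constant (the degenerate case $C_0 = 0$ forces $u \equiv 0$ by \eqref{eq.linear}, and is trivial). Taking the supremum over $x_0$ concludes the argument.

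The only non-trivial ingredient is the classical $\varepsilon$-uniform Lipschitz estimate invoked above; this is the content of the singularly-perturbed regularity theory and I would cite it directly from \cite[Ch.~1]{CS05} rather than reprove it. Accepting this input, the proposition is essentially a transparent rescaling argument, and I do not foresee any further technical obstacle: the main novelty here is only the observation that linear growth allows one to choose the rescaling parameter $R$ as large as one wishes, so that the resulting $\varepsilon = 1/R$ stays in $(0,1]$ uniformly in the base point $x_0$.
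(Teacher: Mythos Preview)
Your proof is correct and follows essentially the same approach as the paper: both reduce to the uniform-in-$\varepsilon$ Lipschitz estimate from \cite{CS05} via the scaling $v(x)=\varepsilon\, u(x/\varepsilon)$ (the paper centers at the origin and lets $\varepsilon\to 0$ to exhaust $\R^n$, while you center at an arbitrary point $x_0$ with $\varepsilon=1/\max(|x_0|,1)$, which is a cosmetic difference). The only minor point is that the paper cites the estimate in the form $\|\nabla v\|_{L^\infty(B_{1/2})}\le C\|v\|_{L^\infty(B_1)}$ rather than $C(1+\|v\|_{L^\infty(B_1)})$, but this does not affect the argument.
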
 
\begin{proof}
The proof of this result is straight-forward, using the results from \cite{CS05}. Indeed, define 
\[
u_\eps(x ) = \eps u\left(\frac{x}{\eps}\right). 
\]
Then, 
\[
\Delta u_\eps = \frac{1}{2\eps} \beta \left(\frac{u_\eps}{\eps}\right) = \frac12\beta_\eps(u_\eps),
\]
and putting $R = \eps^{-1}$ in \eqref{eq.linear} we have that
\[
\|u_\eps\|_{L^\infty(B_{1})}\le C_0.
\]
Now, thanks to \cite[Theorem 1.2]{CS05} (notice that the authors only use that $\Delta u = \frac12\beta(u)$ holds, rather than the minimality of the solution), 
\[
\|\nabla u_\eps\|_{L^\infty(B_{1/2})}\le C C_0,
\]
for some $C$ depending only on $\beta$ and $n$. Putting it in terms of $u$ we reach the desired result, \eqref{eq.lincons}. 
\end{proof}

\subsection{$\Gamma$-convergence to the one-phase problem}
Let us show the $\Gamma$-convergence in $W^{1, 2}(\Omega)$ of the functionals $J_\eps$ defined in \eqref{eq.Jeps}, to $J_0$, the energy functional associated to the one-phase problem, \eqref{eq.J0_0}.  That is, we show 
\[
J_\eps\xrightarrow{\Gamma} J_0,\qquad\textrm{ as }\quad\eps\downarrow 0,
\]
where 
\[
J_\varepsilon(w):=\int_{\Omega}\left\{|\nabla w|^2+\Phi_\varepsilon(w)\right\}dx\quad\textrm{ and }\quad J_0(u) = \int_\Omega \left\{|\nabla u|^2+\chi_{\{u > 0\}}\right\},
\]
and where $\chi_{\{u >  0\}}$ denotes the characteristic function of the set $\{u > 0 \}$. 

\begin{prop}
Let $J_\eps$ and $J_0$ as in \eqref{eq.Jeps}-\eqref{eq.J0_0}, and let (A1) hold. Then,  $J_\eps\xrightarrow{\Gamma} J_0$ in $W^{1,2}(\Omega)$ as $\eps\downarrow 0$.
\end{prop}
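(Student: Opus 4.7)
The plan is to verify the two standard conditions for $\Gamma$-convergence: the liminf inequality along arbitrary converging sequences, and the existence of a recovery sequence for every limit $u \in W^{1,2}(\Omega)$. The key point is that the potential is not of Modica--Mortola type (no surface-tension constant arises), so in fact the constant sequence $u_\eps\equiv u$ will work as a recovery sequence, and the proof reduces to passing to the limit under the integral sign. I will also assume tacitly that $\Omega$ is bounded, which is the usual setup for $\Gamma$-convergence statements of this kind.

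For the liminf inequality, let $u_\eps\to u$ in $W^{1,2}(\Omega)$. Strong convergence in $W^{1,2}$ yields $\int_\Omega|\nabla u_\eps|^2\to\int_\Omega|\nabla u|^2$, so it suffices to treat the potential term. Passing to a subsequence I may assume $u_\eps\to u$ a.e. Writing $\Phi_\eps(t)=\Phi(t/\eps)$ with $\Phi(r)=\int_0^r\beta$, assumption (A1) gives $\Phi\equiv 0$ on $(-\infty,0]$, $\Phi\equiv 1$ on $[1,\infty)$, and $0\le\Phi\le 1$ everywhere. Hence on $\{u>0\}$ one has $u_\eps/\eps\to+\infty$ a.e., so $\Phi_\eps(u_\eps)\to 1$ a.e., while on $\{u\le 0\}$ one simply uses $\Phi_\eps\ge 0$. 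These two facts combine to $\liminf_{\eps\to 0}\Phi_\eps(u_\eps)\ge \chi_{\{u>0\}}$ a.e., and Fatou's lemma concludes the liminf part.

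For the recovery sequence I will take $u_\eps:=u$. Then $u_\eps\to u$ in $W^{1,2}(\Omega)$ trivially, and the Dirichlet term is constant in $\eps$. For the potential, the same pointwise analysis as above shows $\Phi_\eps(u(x))\to\chi_{\{u>0\}}(x)$ a.e., while the uniform bound $0\le\Phi_\eps\le 1$ together with $|\Omega|<\infty$ allows me to invoke dominated convergence to conclude $\int_\Omega\Phi_\eps(u)\to\int_\Omega\chi_{\{u>0\}}$, i.e.\ $J_\eps(u)\to J_0(u)$.

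The main obstacle is essentially cosmetic: one has to be careful on the set $\{u=0\}$, where $u_\eps(x)$ may approach $0$ at an arbitrary rate compared with $\eps$ and so $\Phi_\eps(u_\eps(x))$ need not converge; but this set contributes $0$ to $J_0$, and the inequality $\Phi_\eps\ge 0$ handles the liminf side, so the asymmetry between the two definitions (smooth $\Phi_\eps$ vs.\ discontinuous $\chi_{\{\cdot>0\}}$) does not cause trouble. In particular, no nontrivial recovery construction is needed, in contrast to the Allen--Cahn/Modica--Mortola setting.
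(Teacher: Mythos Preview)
Your proof is correct and follows essentially the same approach as the paper's: Fatou's lemma (after extracting an a.e.\ convergent subsequence) for the liminf inequality, and the constant sequence $u_\eps\equiv u$ as recovery sequence. The only minor difference is that for the limsup the paper simply observes $\Phi_\eps(u)\le\chi_{\{u>0\}}$ pointwise (since $\Phi_\eps(t)=0$ for $t\le 0$ and $0\le\Phi_\eps\le 1$), which gives $J_\eps(u)\le J_0(u)$ for every $\eps$ without invoking dominated convergence or the boundedness of $\Omega$.
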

\begin{proof}
It is enough to show that:
\begin{enumerate}[(i)]
\item For every $u \in W^{1,2}(\Omega)$ and for every sequence $u_\eps$ such that $u_\eps \rightarrow u$ in $W^{1,2}(\Omega)$ as $\eps\downarrow 0$, then 
\[
\liminf_{\eps\downarrow 0} J_\eps(u_\eps) \ge J_0(u).
\]
\item For every $u \in W^{1,2}(B)$ there exists a sequence $u_\eps$ with $u_\eps \rightarrow u$ in $W^{1,2}(\Omega)$ as $\eps\downarrow 0$ such that
\[
\limsup_{\eps\downarrow 0} J_\eps(u_\eps) \le J_0(u).
\]
\end{enumerate}

Let us start by showing (i). The term involving the gradient is clear, since $u_\eps \rightarrow u$ in $W^{1,2}(\Omega)$. Thus, we have to show that 
\[
\liminf_{\eps\downarrow 0} \int_\Omega \Phi_\eps(u_\eps) \ge \int_\Omega \chi_{\{u > 0\}}. 
\]
Suppose that it is not true, so that there exists a sequence $(\eps_k)_{k\in \N}$, $\eps_k\downarrow 0$ as $k\to \infty$, such that 
\[
\lim_{k\to \infty} \int_\Omega \Phi_{\eps_k}(u_{\eps_k}) < \int_\Omega \chi_{\{u > 0\}}. 
\]

Since $u_{\eps_k} \rightarrow u$ in $W^{1,2}(\Omega)$, in particular, there exists a further subsequence $\eps_{k_j}$ such that $u_{\eps_{k_j}}$ converges to $u$ pointwise almost everywhere. In particular, since $\Phi \ge 0$, we have that
\[
\lim_{j\to \infty} \int_{\Omega\cap \{u > 0\}} \Phi_{\eps_{k_j}}(u_{\eps_{k_j}}) < \int_\Omega \chi_{\{u > 0\}}. 
\]
Notice, however, that for almost every $x\in \{u > 0\}$, $\Phi_{\eps_{k_j}}(u_{\eps_{k_j}})(x)\to 1$ (since $\lim_{y\to \infty}\Phi(y) = 1$ and $u_{\eps_{k_j}}(x)\to u(x) > 0$). In particular, by Fatou's lemma, we have 
 \[
\liminf_{j\to \infty} \int_{ \Omega\cap \{u > 0\}} \Phi_{\eps_{k_j}}(u_{\eps_{k_j}}) \ge \int_\Omega \chi_{\{u > 0\}},
\]
a contradiction. 

In order to show (ii) notice that $\Phi(x) \equiv 0$ for $x\in (-\infty,0]$, and that $\Phi' \ge 0$ everywhere. Then, take $u_\eps = u$ for all $\eps$, and notice that $\Phi_\eps (u_\eps ) = \Phi_\eps(u) \le \chi_{\{u \ge 0\}}$, so that we are done. 
\end{proof}

\begin{rem}
Notice that in the previous proof we are not using the fact that $\beta$ is smooth and has support in the interval $[0, 1]$. In particular, the previous lemma still holds true if $\beta \ge 0$, $\int_\R \beta = 1$, and has support in $[0, \infty)$. 
\end{rem}

\subsection{Existence and uniqueness of 1D solutions}

Let us make a description of the one dimensional solutions to the semilinear problem \eqref{pb} in the case (A1) holds and, in addition, 
\[
\beta > 0\quad\textrm{ in }\quad (0, 1).
\]

Let $u: \R^n\to \R$ be a one-dimensional solution of 
\[
\Delta u =\textstyle{\frac12} \beta(u),\quad\textrm{ in } \R^n.
\]
That is, $u  = u(x_1)$ and therefore, $u$ (understood as a function with one dimensional domain) solves the ODE
\[
u''(x_1) = \textstyle{\frac12}\beta(u (x_1)).
\]

By Picard's theorem, it is enough to prescribe $u$ and $u'$ at a point $x_0$ to get existence and uniqueness of solution.

Let us make a qualitative study of how the solution $u$ looks like. 
Since $\beta \ge 0$, in particular, $u $ is convex. Thus, either $u\equiv c$ for some constant $c\in \R\setminus (0, 1)$ --- recall $\beta  >0 $ in $(0, 1)$ --- or $u(x_1) \to +\infty$ as $x_1\to \infty$ or $x_1\to -\infty$. 

Let us assume that $\lim_{x_1\to \infty} u(x_1 ) = +\infty$, the other case follows by the symmetry of the problem. Notice that, since $\beta(t) \equiv 0$ for $t\ge 1$, this means that $u(x_1) = a x_1$ for some $a > 0$ whenever $u(x_1) \ge 1$. In particular, there exists a point $\alpha\in \R$, such that $u(\alpha) = 1$, and $u'(\alpha) = a$. After a translation, let us assume that $ \alpha = 1$, so that $u(1) = 1$, $u'(1) = a$. Now, for any $a > 0$ we reach a different solution to our problem. 

Now, since $u$ is convex, there exists some $p\in \R\cup\{-\infty\}$ such that $u'\ge 0$ in $(p, +\infty)$, and $u'\le 0$ otherwise. Notice that $p\neq +\infty$ from the assumption that $\lim_{x_1\to \infty} u(x_1) = +\infty$. We separate different cases according to the value of $p$.

\begin{figure}
\centering
\includegraphics[scale = 1]{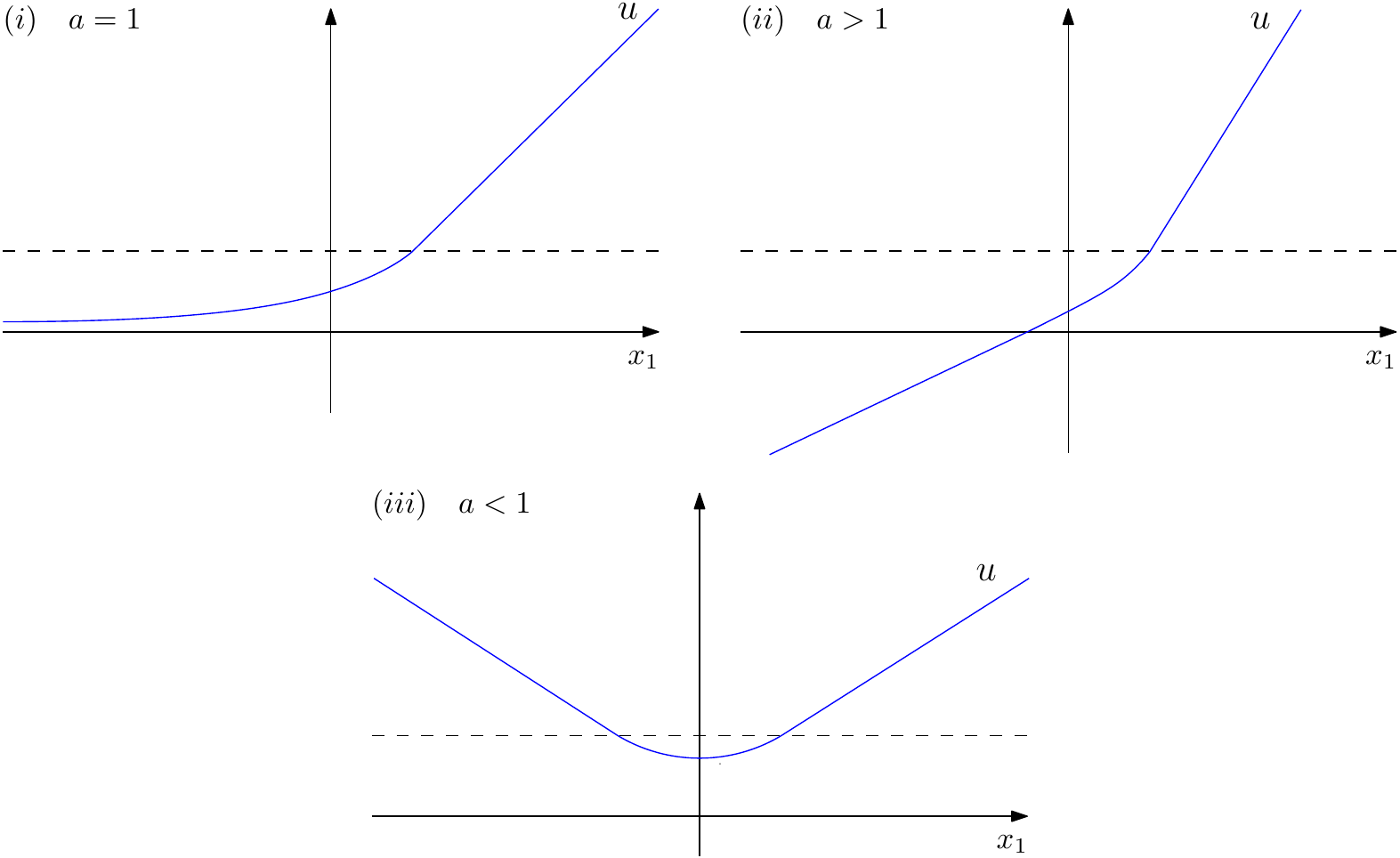}
\caption{Representation of the cases (i) $a > 1$, (ii) $a = 1$, and (iii) $ a < 1$. }
\label{fig.1}
\end{figure}

If $p = -\infty$, then $u'\ge 0$ in $\R$, it is a monotone nondecreasing function. Then, since $u$ is convex, we must necessarily have that $u ' > 0$ (otherwise, we would have that $u(x_1)$ is constant by convexity and uniqueness of our problem). In particular, $u(-\infty)$ is well defined (it could be $-\infty$), and since $u ''\ge 0$, $u'(-\infty)$ is also well defined. In fact, by strict convexity (recall that $\beta > 0$ in $(0, 1)$),  using and $\beta \equiv 0 $ outside of $(0, 1)$, we obtain that either $u(-\infty) = 0$ or $u(-\infty) = -\infty$, and respectively $u'(-\infty) = 0$ and $u'(-\infty)  =b$ for some $b > 0$. We have that
\[
a^2-|u'(-\infty)|^2 = \int_{-\infty}^1 (u'^2)' = \int_{-\infty}^1 u' \beta(u) = \int_{u(-\infty)}^1\beta(t) \,dt = 1,
\]
where in the last equality we are using that $\int_0^1 \beta = 1$, and $u(-\infty)\le 0$. Thus, we have that if the solution is monotone ($p =-\infty$),
\begin{enumerate}
\item[(i)] if $a > 1$, then the solution grows linearly at $+\infty$ with slope $a>0$, and linearly at $-\infty$ with slope $b>0$ such that $a^2-b^2 = 1$; with a smooth transition in between. Notice that the blow-down is a solution to the two-phase problem, consisting in two planes intersecting at 0 whose slopes satisfy the relation $a^2-b^2 = 1$ . See Figure~\ref{fig.1} (i). 
\item[(ii)] if $a  = 1$, then the solution grows linearly as $x_1\to\infty$ with slope 1, and goes to $0$ as $x_1\to -\infty$. The blow-down converges to a solution to the one-phase problem, given by $\max\{0, x_1\}$. See Figure~\ref{fig.1} (ii). 
\end{enumerate}

On the other hand, if $p>-\infty$, the solution is not monotone. After a translation, we can assume that $p = 0$, so that $u'(0) = 0$. Since the solution is not constant, and $\beta > 0$ on $(0, 1)$, this immediately yields that $u(0 ) = y_0\in (0, 1)$, and from the convexity of $u$ and $\beta(y_0) > 0$ we have that $u'(x_1) > 0$ for $x_1 > 0$. Thus, 
\[
a^2 = \int_0^1 (u'^2)' = \int_0^1 u' \beta(u) = \int_{y_0}^1\beta(t) dt < 1,
\]
and so $a < 1$. In this case, by symmetry the solution is even with respect to $x_1 = 0$ and we have that 
\begin{enumerate}
\item[(iii)] if $0<a < 1$, then the solution grows linearly as $x_1 \to \infty $ as $a x_1$, and the solution is also linear as $x_1\to -\infty$, but as $-a x_1$. The blow-down converges to two planes with slopes of opposite signs and absolute value smaller than~1. See Figure~\ref{fig.1} (iii). 
\end{enumerate}

Finally, if we considered the case $\lim_{x_1\to -\infty} u (x_1) = +\infty$ we would obtain the even reflections of the previous solutions, with $p \in \R\cup\{ +\infty\}$.

Thus, when $\beta > 0$ in $(0, 1)$, the previous solutions comprise all possible solutions to our problem, up to even reflections and translations. 

In general, under the assumption (A1), we have the following result about non-negative 1D solutions to our problem.

\begin{prop}
\label{prop.1D}
For any $\beta$ such that (A1) holds there exists a unique non-negative, increasing, non-constant, smooth solution $u$ (up to translations) to $u''= \frac12\beta(u)$ such that $u \to 0$ at $-\infty$.
\end{prop}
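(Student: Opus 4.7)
My plan is to reduce the ODE $u''=\tfrac12\beta(u)$ to a first-order autonomous equation via its energy integral, and then solve it by separation of variables. Set $\Phi(t):=\int_0^t\beta(s)\,ds$, so that $\Phi\equiv 0$ on $(-\infty,0]$, $\Phi\equiv 1$ on $[1,\infty)$, and $\Phi(s)>0$ for every $s>0$ (the latter uses $\mathrm{supp}\,\beta=[0,1]$, which forces $\int_0^\delta\beta>0$ for every $\delta>0$). Suppose $u$ is a candidate solution as in the statement. Since $u''=\tfrac12\beta(u)\ge 0$, the function $u$ is convex, and together with $u\ge 0$, $u$ increasing, and $u(-\infty)=0$, this forces $u'(-\infty)=0$. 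Multiplying $u''=\tfrac12\beta(u)$ by $u'$ and integrating from $-\infty$ to $x$ yields the conservation law $(u')^2=\Phi(u)$, which, since $u'\ge 0$, becomes the first-order equation
\[
u'=\sqrt{\Phi(u)}\quad\text{on }\R.
\]

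For existence I would define $F:(0,\infty)\to\R$ by $F(t):=\int_{1/2}^{t}\tfrac{ds}{\sqrt{\Phi(s)}}$. This $F$ is smooth and strictly increasing, satisfies $F(t)=t-\tfrac12+F(1)$ for $t\ge 1$ (so $F\to+\infty$ at $+\infty$), and — this is the key point — also tends to $-\infty$ as $t\to 0^+$. Granting this, $F$ is a bijection onto $\R$, and $u(x):=F^{-1}(x)$ is the desired solution: it is positive and strictly increasing on $\R$; differentiation gives $u'=\sqrt{\Phi(u)}$, whence $u''=\tfrac{\beta(u)u'}{2\sqrt{\Phi(u)}}=\tfrac12\beta(u)$; and $u(x)\to 0$ as $x\to-\infty$. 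The translates $F^{-1}(\,\cdot-x_0)$ account for the translation ambiguity, and higher regularity follows by a standard bootstrap using $\beta\in C^1$.

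The main obstacle is establishing the divergence $\int_0^{1/2}\tfrac{ds}{\sqrt{\Phi(s)}}=+\infty$, which is precisely what prevents the orbit from reaching $0$ in finite time and forces the asymptotic $u(x)\to 0$ as $x\to-\infty$. This relies crucially on the $C^1$ regularity of $\beta$ combined with $\beta\equiv 0$ on $(-\infty,0]$: together they give $\beta(0)=\beta'(0)=0$, so $\beta(s)=o(s)$ and hence $\Phi(s)=o(s^2)$ as $s\to 0^+$. Therefore $1/\sqrt{\Phi(s)}\ge c/s$ near $0$, and the integral diverges.

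Finally, uniqueness up to translation is immediate from the first-order reduction: any two candidates $u,\tilde u$ both satisfy $v'=\sqrt{\Phi(v)}$, and $\sqrt{\Phi}$ is $C^1$ on $(0,\infty)$ since $\Phi>0$ and $\Phi\in C^2$ there. By translating $\tilde u$ so that $\tilde u(0)=u(0)=\tfrac12$, the Picard--Lindel\"of theorem forces $\tilde u\equiv u$ on a neighborhood of $0$, and continuation gives the identification on all of $\R$.
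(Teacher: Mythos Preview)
Your proof is correct and essentially self-contained, whereas the paper's proof is a one-line reference back to the longer qualitative classification of \emph{all} 1D solutions that precedes the proposition. There the paper normalizes at the point where $u=1$, sets $a=u'$ at that point, and uses the energy identity $a^2-|u'(-\infty)|^2=\int_{u(-\infty)}^1\beta=1$ together with convexity to see that the case $a=1$ is exactly the desired solution; uniqueness comes from Picard's theorem applied with data at $u=1$. Your approach collapses this into a single explicit construction: reduce to $u'=\sqrt{\Phi(u)}$ and invert the time map $F(t)=\int_{1/2}^t ds/\sqrt{\Phi(s)}$. The main technical point you isolate --- that $\int_0^{1/2} ds/\sqrt{\Phi(s)}=+\infty$ because $\beta\in C^1$ with $\beta(0)=\beta'(0)=0$ gives $\Phi(s)=o(s^2)$ --- is what forces $u$ to approach $0$ only asymptotically, and is precisely the content of the remark immediately following the proposition, where the authors observe that dropping the $C^1$ regularity of $\beta$ at $0$ can make the solution reach $0$ at a finite point. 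Your argument thus makes the role of the regularity hypothesis more transparent than the paper's proof itself. One minor point worth spelling out in your uniqueness step: the continuation works because your constructed $u=F^{-1}$ is strictly positive on all of $\R$, so if another candidate $\tilde u$ (after translation to $\tilde u(0)=1/2$) agreed with $u$ near $0$ but reached $0$ at some finite $x_1$, then $u$ would as well, which it does not.
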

\begin{proof}
Notice that from the discussion above we have covered the case $\beta > 0$ in $(0, 1)$; noticing that the solution must tend to $\infty$ as $x_1\to \infty$ by convexity. The same reasoning as before also works if ${\rm supp}~\beta = [0, 1]$ by noticing that, in particular, $\int_\eps^1 \beta(t)\,dt < 1$ for all $\eps > 0$.
\end{proof}

\begin{rem}
Actually, such unique solution fulfils in addition $u'(x_1) = 1$ if $u(x_1) \ge 1$ and $\lim_{x_1\to -\infty} u'''(x_1)/u'(x_1) = 0$.

Indeed, differentiating $u''(x_1) =\frac12 \beta(u(x_1))$ we get $u'''(x_1)/u'(x_1) = \frac12\beta'(u(x_1))$. Now, letting $x_1\to -\infty$ we reach that $\lim_{x_1\to -\infty} u'''(x_1)/u'(x_1) = \frac12\beta'(0) = 0$. That is, the limiting condition of $u'''/u'$ comes from the $C^1$ regularity of $\beta$ at 0. 

Conversely, given a non-negative, increasing, convex, smooth function $v = v(x_1)$ such that $\lim_{x_1\to -\infty} v(x_1) = 0$, $v'(x_1) = 1$ if $v(x_1) \ge 1$, $v''(x_1) >0$ if $v(x_1) < 1$, and 
\begin{equation}
\label{eq.3rdder}
\lim_{x_1\to -\infty} v'''(x_1)/v'(x_1) = 0,
\end{equation}
then there exists a unique $\beta$ fulfilling (A1) such that $v'' = \frac12\beta(v)$.

Indeed, take $\beta(t) := 2v''(v^{-1}(t))$ if $t > 0$, $\beta \equiv 0$ otherwise, so that we obviously have $\frac12\beta(v(x_1)) = v''(x_1)$. Then $\beta \ge 0$, and ${\rm supp}~\beta = [0, 1]$ from the strict convexity of $v$ whenever $v(x_1) < 1$. On the other hand, $\beta(t) \equiv 0$ if $t \le 0$ or $t \ge 0$ and is smooth if $t > 0$ (since $v^{-1}$ is smooth and well-defined for $t > 0$). We just have to check the $C^1$ condition at $t = 0$ and the integrability condition.

Let us start by showing that $\beta$ and $\beta'$ are continuous at 0. Indeed, $\beta$ is continuous at 0 because $\lim_{x_1\to -\infty} v''(x_1) = 0$ and $v^{-1}(t) \to -\infty$ as $t\downarrow 0$. On the other hand, $\beta'(t) =2 v'''(v^{-1}(t)) / v'(v^{-1}(t)) \to 0$ as $t\downarrow 0$ by hypothesis, so that $\beta\in C^1(\R)$. 

Finally, we check the integrability condition. We know that $\lim_{x_1\to -\infty} v'(x_1) = 0$ and, following as above, 
\[
1 = (u'(+\infty))^2-(u'(-\infty))^2 = \int_{-\infty}^\infty (u'^2)' = \int_{-\infty}^\infty u'\beta(u) = \int_0^1\beta(t)\,dt
\]
as we wanted to see.

Finally, let us mention that the previous remarks are also true without the assumption \eqref{eq.3rdder} but then we might get a $\beta$ that is not smooth at the origin. That is not a problem, since Proposition~\ref{prop.1D} still holds removing the smoothness of $\beta$ at the origin (although then we might have $u \equiv 0$ in $(-\infty, p)$ for some $p\in \R$; that is, the solution stabilizes at constant 0). 
\end{rem}

\section{Proof of Theorem~\ref{thm.1stabprop}}
\label{sec.3}

In this section we prove that global Lipschitz stable axially-symmetric solutions to $\Delta u = \frac12\beta(u)$ are one-dimensional in dimensions $n \le 5$. 

\begin{proof}[Proof of Theorem~\ref{thm.1stabprop}]
If $n = 2$, this follows from a known result in \cite[Theorem 2.1]{FV08}. Thus, we assume $n \ge 3$. We use some ideas from  \cite{CC04, CR13}.

Let us begin by doing a sketch of the proof, which will be formalized below: 

The main idea is to take $\xi = u_s\eta$ in the stability condition, to reach 
\[
(n-2)\int_{\R^n} u_s^2\eta^2s^{-2} \le \int_{\R^n} u_s^2|\nabla \eta|^2.
\]
Then, formally taking $\eta = s^{-\alpha}$ with $\alpha > 0$, we deduce
\[
(n-2-\alpha^2)\int_{\R^n} u_s^2 s^{-2\alpha-2}\le 0,
\]
so that $u_s \equiv 0$ in $\R^n$. As we will see, in the previous computations we have to control errors, use appropriate test functions, and make sure everything remains integrable throughout the proof. 

Let us begin the formal proof. We have that $u(x_1,\dots,x_n) = u(s, t)$, where $s = \sqrt{x_1^2+\dots+x_{n-1}^2}$ and $t = x_n$; that is, $u$ is axially symmetric. 

For any stable solution to 
\[
\Delta u = \textstyle{\frac12}\beta(u)\quad\textrm{ in } \R^n,
\]
and $\beta\in C^1$, then, for any $c \in (H^{2}_{\rm loc}\cap L^\infty)(\R^n\setminus \{s = 0\})$, using the stability condition \eqref{eq.stabcond_2} with $\xi = c\eta$ and integrating by parts one gets  
\begin{equation}
\label{eq.stab}
\int_{\R^n} c \left\{ \Delta c - \frac12\beta'(u) c\right\}\eta^2 \, dx \le \int_{\R^n} c^2 |\nabla \eta|^2\, dx,
\end{equation}
for all $\eta \in(H^1\cap L^\infty)(\R^n)$ with compact support in $\R^n\setminus \{s =0\}$. We want to use \eqref{eq.stab} with $c = u_s$ (which is not smooth for $s = 0$), and $\eta \in(H^1\cap L^\infty)(\R^n)$.

On the other hand, noticing that 
\begin{equation}
\label{eq.Lst}
\Delta u = u_{ss}+  \frac{n-2}{s} u_s + u_{tt},
\end{equation}
then we can differentiate $\Delta u = \frac12\beta(u)$ with respect to $s$ to  obtain 
\[
\Delta u_s - (n-2)\frac{u_s}{s^2} = \textstyle{\frac12}\beta'(u) \, u_s.
\]

Thus, putting $c = u_s$ in \eqref{eq.stab} (notice that $u_s \in (H^{2}_{\rm loc}\cap L^\infty)(\R^n\setminus \{s = 0\})$ by interior estimates) we reach 
\begin{equation}
\label{eq.stab2}
(n-2)\int_{\R^n} u_s^2\eta^2s^{-2}\, dx \le \int_{\R^n} u_s^2|\nabla \eta|^2\, dx
\end{equation}
for all $\eta \in(H^1\cap L^\infty)(\R^n)$ with compact support in $\R^n\setminus \{s =0\}$. Suppose, now, that $\eta \in(H^1\cap L^\infty)(\R^n)$ with compact support in $\R^n$, and take $\eta\zeta(s/\eps)$ as a test function in \eqref{eq.stab2}, where $\zeta(s) \in C^\infty(\R^+)$, $\zeta \equiv 1$ for $s \ge 1$, $\zeta' \ge 0$, $\zeta \equiv 0$ for $s\in [0, 1/2]$. By letting $\eps\downarrow 0$, and using that $|u_s|\to 0$ as $s\downarrow 0$ by regularity and axial symmetry, one reaches that \eqref{eq.stab2} also holds for $\eta \in(H^1\cap L^\infty)(\R^n)$ with compact support in $\R^n$. 

Now, for any $\eps > 0$ and $R > 1$ define $\eta_{\eps, R}$ as 
\[
  \eta_{\eps, R} = \left\{ \begin{array}{ll}
  s^{-\alpha}\rho_R & \textrm{ if } s > \eps\\
  \eps^{-\alpha}\rho_R  & \textrm{ if } s \le \eps,
  \end{array}\right.
\]
where $\rho_R \in C^\infty(\R^n)$ $\rho_R\ge 0$, is a smooth non-negative function such that
\[
\rho_{R} = \left\{ \begin{array}{ll}
  1 & \textrm{ in } B_R\\
  0 & \textrm{ in } \R^n \setminus B_{2R}.
  \end{array}\right.
\]

Then, 
\[
  |\nabla \eta_{\eps, R}|^2 \le \left\{ \begin{array}{ll}
  \alpha^2 s^{-2\alpha-2} \rho_R^2 & \textrm{ in } B_R\cap \{s > \eps\}\\
    \alpha^2 s^{-2\alpha-2} \rho_R^2 + s^{-2\alpha}|\nabla \rho_R|^2 & \textrm{ in } B_{2R}\setminus B_R\cap \{s > \eps\}\\
  \eps^{-2\alpha}|\nabla \rho_R|^2  & \textrm{ if } s \le \eps.
  \end{array}\right.
\]

Putting $\eta_{\eps, R}$ in the left hand side of \eqref{eq.stab2} we reach,
\[
(n-2)\int_{\R^n} u_s^2\eta^2s^{-2}\, dx  = (n-2)\int_{\{s > \eps\}\cap B_{2R}} u_s^2 s^{-2\alpha-2}\rho^2_R\, dx + (n-2)\int_{\{s\le \eps\}\cap B_{2R}} u_s^2 \eps^{-2\alpha}\rho_R^2. 
\]
On the other hand, putting it in the right hand side of \eqref{eq.stab2}, we get that
\begin{align*}
\int_{\R^n} u_s^2|\nabla \eta|^2\, dx \le \alpha^2\int_{\{s > \eps\}\cap B_{2R}} u_s^2 s^{-2\alpha-2}\rho_R^2\, dx + & \int_{B_{2R}\setminus B_R\cap \{s > \eps\}} u_s^2 s^{-2\alpha} |\nabla \rho_R|^2 \\
& + \eps^{-2\alpha}\int_{\{s \le \eps\}\}\cap B_{2R}}|\nabla\rho_R|^2\, dx.
\end{align*}

Putting all together, and using that $\int_{s \le \eps}|\nabla\rho_R|^2\, dx \le C R^{-2}\eps^{n-1} R$ we get 
\[
\begin{split}
&(n-2-\alpha^2)\int_{\{s > \eps\}\cap B_{2R}} u_s^2 s^{-2\alpha-2}\rho_R^2\, dx + (n-2)\int_{\{s\le \eps\}\cap B_{2R}} u_s^2 \eps^{-2\alpha}\rho_R^2 \le\\
&~~~~~~~~~~~~~~~~~~~~~~~~~~~~~~~~~~~~~~~~\le  R^{-2}\int_{B_{2R}\setminus B_R\cap \{s > \eps\}} u_s^2 s^{-2\alpha}  + C R^{-1}\eps^{n-1-2\alpha}.
\end{split}
\]

Note now that, changing variables $dx\mapsto s^{n-2} ds dt$, and using that $u_s^2 \le |\nabla u|^2\le  C$ is bounded in $\R^n$, we get that 
\[
\int_{B_{2R}\setminus B_R\cap \{s > \eps\}} u_s^2 s^{-2\alpha}\, dx\le CR \int_\eps^{2R} s^{ n-2-2\alpha}\, ds = CR(R^{n-1-2\alpha} - \eps^{n-1-2\alpha})
\]

So that we get

\[
\begin{split}
&(n-2-\alpha^2)\int_{\{s > \eps\}\cap B_{2R}} u_s^2 s^{-2\alpha-2}\rho_R^2\, dx + (n-2)\int_{\{s\le \eps\}\cap B_{2R}} u_s^2 \eps^{-2\alpha}\rho_R^2 \le\\
&~~~~~~~~~~~~~~~~~~~~~~~~~~~~~~~~~~~~~~~~~~~~~~~~~~~~~~~~~~\le  C R^{n-2\alpha-2} + C R^{-1}\eps^{n-1-2\alpha}.
\end{split}
\]
Notice that $\int_{\{s\le \eps\}\cap B_{2R}} u_s^2 \eps^{-2\alpha}\rho_R^2\le C\eps^{-2\alpha} \int_{\{s\le \eps\}\cap B_{2R}} \, dx = CR\eps^{n-1-2\alpha}$. Thus, we have proved 
\[
(n-2-\alpha^2)\int_{\{s > \eps\}\cap B_{2R}} u_s^2 s^{-2\alpha-2}\rho_R^2\, dx  \le  C R^{n-2\alpha-2} + C R^{-1}\eps^{n-1-2\alpha} + CR\eps^{n-1-2\alpha} .
\]

In order to obtain $u_s\equiv 0$ in $\R^n$, we now want to take $\eps\downarrow 0$, $R\uparrow \infty$, and do that in such a way that the right-hand side above goes to zero. 

If $n-2> \alpha^2$, $n-2\alpha-2 < 0$ and $n-1 > 2\alpha$, the right-hand side goes to 0 as $R \to \infty$ if $\eps = R^{-\frac{1}{n-1-2\alpha-\eps_0}}\to 0$ (for some $\eps_0>0$ such that $n -1 > 2\alpha +\eps_0$). In particular, if such $\alpha$ exists, then $u_s\equiv 0$ and $u$ is one dimensional. 

Notice that if $n-2 > \alpha^2$ then $n-1 > \alpha^2+1\ge 2\alpha$, so that the two compatibility conditions become
\[
2\alpha > n -2 > \alpha^2.
\]
This is equivalent to 
\[
\left(\frac{n-2}{2}\right)^2 < n-2,
\]
or $2 < n < 6$. 

This means that the solution $u$, for $3\le n \le 5$, is one dimensional in the last coordinate, $u = u(t)$.
\end{proof}

\section{Proof of Theorem~\ref{thm.1stab1phase}}
\label{sec.4}

Finally, we prove that stable, non-negative, axially symmetric solutions to the one-phase problem are also one-dimensional for $n\le 5$. Before doing so, let us remember an equivalent characterization of the stability condition for the one-phase problem as seen in \cite[Lemma 1]{CJK04}. 

\begin{lem}[\cite{CJK04}]
\label{lem.cjk}
Let $n \ge 2$, and let $u\in {\rm Lip} (\R^n)$ be a stable, non-negative solution to the one-phase problem; namely, a critical point of the  one-phase functional \eqref{eq.onephasefunctional} with non-negative second variation. Then, 
\begin{equation}
\label{eq.stabcond}
\int_{\partial\{u > 0\}} H\xi^2~d\sigma \le \int_{\{u > 0\}}|\nabla \xi|^2~dx
\end{equation}
for any $\xi \in C^\infty_0(\R^n)$, $\xi \ge 0$, and where $H\ge 0$ is the mean curvature of $\de\{u > 0\}$.
\end{lem}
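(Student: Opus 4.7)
The plan is to derive \eqref{eq.stabcond} by computing the second variation of $J(\cdot,B)$ along an \emph{inner} (domain) variation. Outer variations $u\mapsto u+t\varphi$ with $\varphi$ compactly supported in $\{u>0\}$ only capture harmonicity of $u$ and are blind to the $\chi_{\{u>0\}}$ term; the free-boundary content of stability resides in deformations of the set $\{u>0\}$ itself.

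Given $\xi\in C^\infty_0(\R^n)$ with $\xi\ge 0$, let $\nu$ denote the outer unit normal to $\Gamma:=\partial\{u>0\}$ (extended smoothly to a tubular neighborhood of $\Gamma$) and pick a compactly supported vector field $Y\in C^\infty_c(\R^n,\R^n)$ satisfying $Y=\xi\nu$ on $\Gamma$. For $|t|$ small, $\Phi_t:=\mathrm{id}+tY$ is a diffeomorphism, and the transported function $u_t:=u\circ\Phi_t^{-1}$ is an admissible competitor that coincides with $u$ outside $\mathrm{supp}(Y)$, with $\{u_t>0\}=\Phi_t(\{u>0\})$. With a ball $B\supset\mathrm{supp}(Y)$, the change of variables $y=\Phi_t(x)$ gives
\begin{equation*}
J(u_t,B)=\int_{\{u>0\}\cap B}\bigl\{|(D\Phi_t)^{-T}\nabla u|^2+1\bigr\}\,|\det D\Phi_t|\,dx.
\end{equation*}

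Taylor-expanding in $t$ up to order two, the $O(t)$ coefficient reduces, via integration by parts against the divergence-free energy-momentum tensor $T_{ij}=|\nabla u|^2\delta_{ij}-2u_iu_j$ (which is divergence-free in $\{u>0\}$ because $\Delta u=0$), to $\int_\Gamma(1-|\nabla u|^2)\,\xi\,d\sigma$; its vanishing for all admissible $\xi$ recovers the free-boundary condition $|\nabla u|=1$ on $\Gamma$ and kills the corresponding cross terms in the $O(t^2)$ expansion. The volume part of the second-order coefficient is the classical second variation of volume for $Y=\xi\nu$, namely $\int_\Gamma H\xi^2\,d\sigma$. The Dirichlet part, after systematic integration by parts and the use of $\Delta u=0$ and $|\nabla u|=1$ on $\Gamma$, collapses to $\int_{\{u>0\}}|\nabla w|^2\,dx$, where $w$ is a function on $\{u>0\}$ determined by $Y$ with $w=\xi$ on $\Gamma$. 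By the Dirichlet principle, $\int_{\{u>0\}}|\nabla w|^2\le\int_{\{u>0\}}|\nabla\xi|^2$, because $\xi$ itself is an admissible extension of $\xi|_\Gamma$ into $\{u>0\}$. Stability $\tfrac{d^2}{dt^2}\big|_{t=0}J(u_t,B)\ge 0$ then translates into
\begin{equation*}
\int_{\{u>0\}}|\nabla\xi|^2\,dx\;\ge\;\int_\Gamma H\xi^2\,d\sigma,
\end{equation*}
which is \eqref{eq.stabcond}. The sign $H\ge 0$ on $\Gamma$ is a separate property of (stable) critical points of the one-phase problem, derivable e.g.\ by testing against inward perturbations that shrink $\{u>0\}$.

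The main technical obstacle is the bookkeeping of the second variation: one must identify the Dirichlet contribution as a clean $\int|\nabla w|^2$ with $w|_\Gamma=\xi|_\Gamma$ so that the Dirichlet principle can replace $w$ by $\xi$. This hinges on choosing $Y$ with the right structure near $\Gamma$ and on the cancellations produced by $\Delta u=0$ and $|\nabla u|=1$. Once these are in place, the remaining ingredient --- the classical $\int H\xi^2$ second variation of volume --- is a standard computation using the smooth extension of $\nu$.
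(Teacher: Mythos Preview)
The paper does not reprove this lemma: it simply cites \cite{CJK04} and explains why the extra hypotheses there (1-homogeneity, $n\ge 3$) are not needed for the implication in question. Your proposal instead tries to sketch the actual domain-variation argument behind \cite{CJK04}, which is a reasonable thing to do; but as written the sketch has a genuine logical gap at the Dirichlet-principle step.

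You take the pure pullback family $u_t=u\circ\Phi_t^{-1}$ and assert that the Dirichlet part of the second variation ``collapses to $\int_{\{u>0\}}|\nabla w|^2$'' with $w$ ``determined by $Y$'' and $w|_\Gamma=\xi$, and then invoke the Dirichlet principle to get $\int|\nabla w|^2\le\int|\nabla\xi|^2$. But the Dirichlet principle gives that inequality only when $w$ is the \emph{harmonic} (energy-minimizing) extension of $\xi|_\Gamma$. For the pullback family the infinitesimal velocity is $w=-Y\cdot\nabla u$, which for a generic choice of $Y$ is \emph{not} harmonic, so the inequality you need simply does not follow. (Nor can you easily force $w=\xi$ by choosing $Y$, since $\nabla u$ may vanish in the interior of $\{u>0\}$.) In addition, the claim that the Dirichlet second variation of the pullback family reduces exactly to $\int|\nabla w|^2$ is itself the main computation and is not obvious; for a generic $Y$ there are further terms.

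The standard remedy, and what is actually done in \cite{CJK04}, is to work not with the pullback but with the family $u_t$ that is \emph{harmonic} in the deformed domain $\Phi_t(\{u>0\})$ (matching $u$ outside $\mathrm{supp}\,Y$, vanishing on the moved boundary). Differentiating $\Delta u_t=0$ shows that the velocity $w=\dot u_t|_{t=0}$ is harmonic with $w|_\Gamma=\xi$; for this family the second variation is cleanly $\int_{\{u>0\}}|\nabla w|^2-\int_\Gamma H\xi^2$, stability gives $\int|\nabla w|^2\ge\int_\Gamma H\xi^2$, and \emph{now} the Dirichlet principle legitimately yields $\int|\nabla\xi|^2\ge\int|\nabla w|^2\ge\int_\Gamma H\xi^2$. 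Once you switch to this harmonic family your outline goes through.
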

\begin{proof}
It just follows as in \cite{CJK04}. Notice that \cite[Lemma 1]{CJK04} is stated for $1$-homogeneous energy minimizers in $n \ge 3$, but the proof actually shows that \ref{eq.stabcond} is equivalent to the non-negativity of the second variation of $J_0$. The $1$-homogeneity assumption is only used to argue that $|\nabla u |$ is globally bounded, which is an assumption in our case. And the $n \ge 3$ is used in the second part of the Lemma, but not in the first part, which is the one we will be using here. Therefore, the stability condition \eqref{eq.stabcond} also works in our case, for $n \ge 2$.
\end{proof}

\begin{rem}
The previous condition \eqref{eq.stabcond} is, in fact, an equivalent characterization of the non-negativity of the second variation of the one-phase functional \eqref{eq.onephasefunctional}. 
\end{rem}

\begin{proof}[Proof of Theorem~\ref{thm.1stab1phase}]
We separate the proof into two cases: $n = 2$ and $3\le n\le 5$. We start with the latter.
\\[0.2cm]
{\it \underline{\smash{Case 1}}, dimensions $3 \le n \le 5$:} By Lemma~\ref{lem.cjk} the stability condition can be equivalently written as 
\begin{equation}
\label{eq.stabcond2}
\int_{\partial\{u > 0\}} H\xi^2~d\sigma \le \int_{\{u > 0\}}|\nabla \xi|^2~dx
\end{equation}
for any $\xi \in C^\infty_0(\R^n)$, $\xi \ge 0$, and where $H\ge 0$ is the mean curvature of $\de\{u > 0\}$. By approximation, it will be enough to check for $\xi \in H^2_0(\R^n)$ and by taking $|\xi|$ instead of $\xi$ one can avoid the sign condition. 

The idea of the proof, now, is formally the same as in the proof of Theorem~\ref{thm.1stabprop} --- to take $\xi = u_s s^{-\alpha}$ as a test function in the stability condition \eqref{eq.stabcond2} --- however, we now need a more geometric approach to obtain the intermediate equalities. 

Choose $\xi = c\eta$ for some $c\in L^\infty$ smooth and $\eta\in C^2_c(\R^n)$ (with compact support). Then, integrating by parts, 
\begin{equation}
\label{eq.comp}
\begin{split}
\int_{\{u > 0\}}|\nabla (c\eta)|^2 & = \int_{\{u > 0\}}\left(|\nabla c|^2\eta^2+|\nabla \eta|^2 c^2\right)+\int_{\{u > 0\}} c \nabla(\eta^2)\cdot \nabla c\\
& = \int_{\{u > 0\}}\left(|\nabla \eta|^2 c^2-\eta^2 c\Delta c \right)+\int_{\partial\{u > 0\}} c \eta \nabla c\cdot \nu ~d\sigma,
\end{split}
\end{equation}
where $\nu$ denotes the outer unit normal of the set $\{u > 0\}$ at each point on the boundary $\partial \{u > 0 \}$. 

Let $u(x_1,\dots,x_n) = u(s, t)$ be axially symmetric, where $s = \sqrt{x_1^2+\dots+x_{n-1}^2}$ and $t = x_n$. In the set $\{ u > 0\}$ we know that $u$ is harmonic, with $\de_\nu u = -1$ on $\de\{u > 0\}$. In particular, $u$ is smooth up to the boundary, and $u_s = \de_s u$ is also smooth up to the boundary $\de\{u > 0\}$. Therefore, we can take $c = \tilde u_s$, where $\tilde u_s$ denotes any smooth extension of $u_s$ to the whole domain. Since we are only interested in the behaviour in $\{u > 0 \}$ we will write $c  =u_s$, understanding that $\nabla c$ at the boundary is actually $\nabla \tilde u_s$ and coincides with the limit of $\nabla u_s$ coming from the set $\{ u > 0\}$. 

Notice, also, that as in Theorem~\ref{thm.1stabprop} (see \eqref{eq.Lst}), we have 
\[
-\Delta u_s +(n-2)\frac{u_s}{s^2}= 0,\quad\textrm{ in } \{ u > 0\}
\]
since $u$ is harmonic in $\{u > 0 \}$. Putting all together in \eqref{eq.stabcond2} we reach 
\[
\int_{\partial\{u > 0\}} Hu_s^2\eta^2~d\sigma \le \int_{\{u > 0\}}\left(|\nabla \eta|^2 u_s^2-(n-2)\eta^2 \frac{u_s^2}{s^2} \right)+\int_{\partial\{u > 0\}} u_s \eta \nabla u_s\cdot \nu ~d\sigma.
\]

Now, we claim that 
\begin{equation}
\label{eq.claimH}
\nabla (u_s)\cdot \nu = Hu_s\quad\textrm{ on }\quad \de\{u > 0 \},
\end{equation}
so that we deduce
\[
(n-2)\int_{\{u > 0\}}\eta^2 u_s^2 s^{-2}\le \int_{\{u > 0\}}|\nabla \eta|^2 u_s^2.
\]
Noting that $u_s \equiv 0$ in $\{ u  = 0\}$ we have obtained the inequality \eqref{eq.stab2} from Theorem~\ref{thm.1stabprop}. From here, we can proceed exactly as in Theorem~\ref{thm.1stabprop} to get the desired result. 

To finish, let us now prove \eqref{eq.claimH}. On the one hand, notice that $\nabla u(x) = u_s \boldsymbol{e}_s(x) + u_t \boldsymbol{e}_t$, where $\boldsymbol{e}_s(x) := (x_1^2+\dots+x_{n-1}^2)^{-1/2}\left(x_1\boldsymbol{e}_1+\dots+x_{n-1}\boldsymbol{e}_{n-1}\right)$ and $\boldsymbol{e}_t := \boldsymbol{e}_n$; and that $\frac{\de}{\de s} = \boldsymbol{e}_s(x) \cdot \nabla$. Then a simple computation shows
\[
\de_s \nabla u(x) = u_{ss} \boldsymbol{e}_s(x)+u_{ts}\boldsymbol{e}_t = \nabla u_s(x);
\]
that is, derivatives in the $s$ direction commute with derivatives in the $\boldsymbol{e}_i$ direction for any $1\le i \le n$, since $u$ depends only on $(s, t)$. Now notice that 
\[
\nu \cdot\nabla u_s(x) = \nabla u \cdot \de_s \nabla u = \frac12 \de_s \left(|\nabla u|^2\right),
\]
where we are also using that $\nu = \nabla u$ on $\de \{u > 0\}$. 

Finally, on the other hand, we may assume that for a point $x_\circ \in \de\{ u > 0\}$, we have $\nu(x_\circ) = \boldsymbol{e}_1$; otherwise we can rotate our setting. Then, from $|\nabla u|^2 = u_1^2+\dots+u_{n}^2$ we easily get that $\nabla \left(|\nabla u|^2\right)(x_\circ) = 2u_1 u_{11} \boldsymbol{e}_1$, where we are using that $u_i = 0$ for $2\le i \le n$. Now, noticing that $u_1(x_\circ) = \de_\nu u(x_\circ) = -1$ and that $u_{11}(x_\circ) = u_{\nu \nu }(x_\circ) = -H(x_\circ)$ (see \cite[Lemma 1]{CJK04}) we obtain that $\nabla \left(|\nabla u|^2\right) = 2H\nu$ in general. This completes the proof, since
\[
\frac12 \de_s\left(|\nabla u|^2\right) =\frac12 \boldsymbol{e}_s \cdot \nabla \left(|\nabla u|^2\right) = H \boldsymbol{e}_s \cdot\nu = H\boldsymbol{e}_s \cdot\nabla u = Hu_s,
\]
as we wanted to see. 
\\[0.2cm]
{\it \underline{\smash{Case 2}}, dimension $n = 2$:} Notice that the proof of \eqref{eq.claimH} also holds if instead of $s$ we take any direction $x_i$ for $i = 1, 2$, so that we have that for any $\eta\in C^2_c(\R^n)$, 
\begin{equation}
\label{eq.comp1}
\int_{\partial \{u  > 0\}} Hu_i^2 \eta^2 \, d\sigma = \int_{\partial\{ u > 0\}} u_i\eta\nabla u_i\cdot \nu\, d\sigma.
\end{equation}
Moreover, from \eqref{eq.comp} and using that $\Delta u_i = 0$ in $\{u > 0\}$ we have that 
\begin{equation}
\label{eq.comp2}
\begin{split}
\int_{\partial\{ u > 0\}} u_i\eta\nabla u_i\cdot \nu\, d\sigma & = \int_{\{ u > 0\}} |\nabla (u_i\eta)|^2 - \int_{\{u > 0\}} u_i^2 |\nabla \eta|^2 \\
& = \int_{\{ u > 0\}} |\nabla u_i|^2\eta^2 + 2\int_{\{u > 0\}} u_i \eta\nabla u_i\cdot\nabla \eta\\
& = \int_{\{ u > 0\}} |\nabla u_i|^2\eta^2 + \frac12\int_{\{u > 0\}} \nabla u_i^2\cdot\nabla \eta^2
\end{split}
\end{equation}

Now, as in \cite[Second proof of Conjecture 1.1]{FV08} (which uses some ideas from \cite{Far02} and \cite{Ca10}) for the general semilinear case, we check the stability condition \eqref{eq.stabcond2} with $\xi = |\nabla u |\eta$,
\begin{equation}
\label{eq.comp3}
\sum_{i = 1}^n \int_{\partial\{u > 0\}} Hu_i^2\eta^2\, d\sigma \le \int_{\{u > 0\}}|\nabla \left(|\nabla u |\eta\right)|^2\, dx.
\end{equation}
Let us develop the right-hand side 
\[
\int_{\{u > 0\}}|\nabla \left(|\nabla u |\eta\right)|^2 = \int_{\{u > 0\}}\left\{|\nabla |\nabla u ||^2\eta^2 + |\nabla u|^2|\nabla \eta|^2+\frac12 \nabla \eta^2\cdot\nabla |\nabla u|^2 \right\}.
\]

Using this in \eqref{eq.comp3}, together with \eqref{eq.comp1}-\eqref{eq.comp2} yields
\[
\int_{\{u > 0\}} |\nabla u|^2|\nabla \eta|^2 \ge \int_{\{|\nabla u| > 0 \}}\left( \sum_{i = 1}^n |\nabla u_i|^2 - |\nabla |\nabla u ||^2 \right)\eta^2.
\]
Notice that the right-hand side is taken now in the set where $\nabla u \neq 0$. Now, one can finish exactly as in \cite{FV08} by a capacity argument. Indeed, one can write 
\[
\sum_{i = 1}^n |\nabla u_i|^2 - |\nabla |\nabla u ||^2 = |\nabla u|^2\mathcal{G}^2 +|\nabla_T|\nabla u||^2
\]
(see, for instance, \cite[Lemma 2.1]{SZ98}) where $\nabla_T$ denotes the tangential gradient along the level set of $u$, and $\mathcal{G}^2$ denotes the sum of the squares of the principal curvatures of the level set of $u$; which is well-defined, since the domain is $\nabla u  \neq 0$. Thus,  
\[
\int_{\{u > 0\}} |\nabla u|^2|\nabla \eta|^2 \ge \int_{\{|\nabla u| > 0 \}}\left( |\nabla u|^2\mathcal{G}^2 +|\nabla_T|\nabla u||^2 \right)\eta^2.
\]
Now, by taking 
\[
\eta(x) := 
\left\{
\begin{array}{cl}
1 & \textrm{ if } |x|< 1,\\
\frac{\log{R} - \log |x|}{\log R} & \textrm{ if }1 \le |x|<  R,\\
0 & \textrm{ if }|x| \ge R,
\end{array}
\right.
\]
one gets, by letting $R \to \infty$, that $\mathcal{G}^2 = \nabla_T|\nabla u|\equiv 0$. Indeed, we are using here that $\eta(x )\to 1$ as $R\to \infty$ for each $x\in \R^2$, and $\int_{\R^2}|\nabla\eta|^2 \to 0$ as $R\to \infty$. Thus, the level sets of $u$ are parallel hyper-planes, and $u$ is one-dimensional. 
\end{proof}

\begin{rem}
In the case $n  =2$, we have not used the hypothesis that $u$ is even in the first variable (coming from the axial symmetry).
\end{rem}

\end{document}